\documentclass{amsart}
\usepackage{amsfonts}
\usepackage{amsmath}
\usepackage{amssymb}
\usepackage{amsthm}

\newtheorem{theorem}{Theorem}

\newtheorem{lemma}{Lemma}
\newtheorem{rem}{Remark}

\begin{document}

\title[Orthogonal apartments in Hilbert Grassmannians]
{Orthogonal apartments in Hilbert Grassmannians. Finite-dimensional case}
\author{Mark Pankov}
\subjclass[2000]{}
\keywords{Hilbert Grassmannian, compatibility relation}
\address{Department of Mathematics and Computer Science, 
University of Warmia and Mazury,
S{\l}oneczna 54, Olsztyn, Poland}
\email{pankov@matman.uwm.edu.pl}

\maketitle
\begin{abstract}
Let $H$ be a complex Hilbert space of finite dimension $n\ge 3$.
Denote by ${\mathcal G}_{k}(H)$ the Grassmannian consisting of 
$k$-dimensional subspaces of $H$.
Every orthogonal apartment of ${\mathcal G}_{k}(H)$ 
is defined by a certain orthogonal base of $H$ 
and consists of all $k$-dimensional subspaces spanned by subsets of this base.
For $n\ne 2k$ (except the case when $n=6$ and $k$ is equal to $2$ or $4$)
we show that every bijective transformation of ${\mathcal G}_{k}(H)$
sending orthogonal apartments to orthogonal apartments is induced by 
an unitary or conjugate-unitary operator on $H$.
The second result is the following: 
if $n=2k\ge 8$ and $f$ is a bijective transformation of ${\mathcal G}_{k}(H)$ such that 
$f$ and $f^{-1}$ send orthogonal apartments to orthogonal apartments
then there is an unitary or conjugate-unitary operator $U$ 
such that for every $X\in {\mathcal G}_{k}(H)$ we have $f(X)=U(X)$
or $f(X)$ coincides with the orthogonal complement of $U(X)$.
\end{abstract}

\section{Introduction and statement of results}
Let $H$ be a complex Hilbert space of finite or infinite dimension.
For every natu\-ral $k<\dim H$ we denote by ${\mathcal G}_{k}(H)$
the Grassmannian consisting of $k$-dimensional subspaces of $H$.
For every orthogonal base of $H$ 
the associated {\it orthogonal apartment} of ${\mathcal G}_{k}(H)$
consists of all $k$-dimensional subspaces spanned by subsets of this base.
Orthogonal apartments  of Hilbert Grassmannians were introduced in \cite{Pankov}.
This notion comes from the theory of Tits buildings \cite{Tits}.
Grassmannians related to a building of type $\textsf{A}_{n-1}$ 
are the Grassmannians ${\mathcal G}_{k}(V)$, $k\in \{1,\dots,n-1\}$
formed by $k$-dimensional subspaces of an $n$-dimensional vector space $V$
(see \cite{Pankov-book1,Pasini} for the details) 
and every apartment of ${\mathcal G}_{k}(V)$ consists of all $k$-dimensional subspaces
spanned by subsets of a certain base of $V$.

Recall that two closed subspaces $X,Y\subset H$ are {\it compatible}
if there exist closed subspaces $X',Y'$ such that $X\cap Y,X',Y'$
are mutually orthogonal and 
$$X=X'+(X\cap Y),\;\;Y=Y'+(X\cap Y).$$
The concept of orthogonal apartment is interesting for the following reason: 
orthogo\-nal apartments can be characterized as maximal subsets of 
mutually compatible elements of ${\mathcal G}_{k}(H)$ \cite[Proposition 1]{Pankov}. 
Note that the compatibility relation is defined for any logic, 
i.e. a lattice with an addition operation  known as the {\it negation}.
In classical logics any two elements are compatible and
quantum logics contain non-compatible elements.

Consider  the logic ${\mathcal L}(H)$ whose elements are closed subspaces of $H$ 
and the negation is the operation of orthogonal complementary.
In the case when $H$ is infinite-dimensional and separable,
this logic is exploited in mathematical foundations of quantum theory 
(see, for example, \cite{Var}). 
Every automorphism of ${\mathcal L}(H)$ is induced by 
an unitary or conjugate-unitary operator on $H$.
It follows from \cite[Theorem 2.8]{MS} that 
for every bijective transformation $f$ of ${\mathcal L}(H)$ preserving the compatibility relation
in both directions, i.e. $f$ and $f^{-1}$ send compatible elements to compatible elements, 
there is an unitary or conjugate-unitary operator $U$
such that for every $X\in {\mathcal L}(H)$ we have $f(X)=U(X)$ or $f(X)$
coincides with the orthogonal complement of $U(X)$.
Note that if closed subspaces $X$ and $Y$ are compatible
then the orthogonal complement of $X$ is compatible to $Y$.

If $H$ is infinite-dimensional and $f$ is a bijective transformation of ${\mathcal G}_{k}(H)$
such that $f$ and $f^{-1}$ send orthogonal apartments to orthogonal apartments
(equivalently, $f$ preserves the compatibility relation in both directions)
then $f$ is induced by an unitary or conjugate-unitary operator \cite[Theorem 1]{Pankov}.
In this paper, a similar result will be obtained for the case when $H$ is finite-dimensional. 
As in \cite{Pankov}, we will use {\it complementary subsets} of orthogonal apartments,
but the arguments  will be more complicated.  

\begin{theorem}\label{theorem1}
Suppose that $\dim H=n$ is finite and not less than $3$.
Let $f$ be a bijective transformation of ${\mathcal G}_{k}(H)$
sending orthogonal apartments to orthogonal apartments 
and let $n\ne 2k$.
We also require that $k$ is not equal to $2$ or $4$ if $n=6$.
Then $f$ is induced by an unitary or conjugate-unitary operator on $H$.
\end{theorem}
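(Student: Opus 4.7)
The plan is to reduce the theorem to Chow's theorem on the Grassmannian and then extract the (conjugate-)unitary structure from the orthogonality information encoded in apartments. By Proposition~1 of \cite{Pankov}, orthogonal apartments are precisely the maximal sets of pairwise compatible elements of $\mathcal G_k(H)$, so the hypothesis that $f$ sends orthogonal apartments to orthogonal apartments implies that $f$ preserves the compatibility relation (a priori only in one direction). The bulk of the work is to upgrade this to preservation of adjacency, i.e.\ the relation $\dim(X\cap Y)=k-1$.

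The adjacency step is where the combinatorial hypotheses on $(n,k)$ enter. Inside a fixed apartment $A=\{\langle e_I\rangle:|I|=k\}$ the adjacency relation corresponds to $|I\cap J|=k-1$, and to distinguish this from the other possible intersection patterns of the indexing sets I would exploit \emph{complementary subsets} of apartments (as signalled by the author) together with the way two distinct apartments overlap. The hypothesis $n\ne 2k$ kills the annihilator/duality symmetry that would otherwise exchange $\mathcal G_k(H)$ with $\mathcal G_{n-k}(H)$, and the exclusion $(n,k)\in\{(6,2),(6,4)\}$ removes the small sporadic cases where the outer automorphism of $S_6$ produces extra symmetries of the Johnson graph $J(6,k)$ allowing adjacency to be confused with another compatibility-invariant relation. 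In the remaining cases, these combinatorial obstructions disappear and adjacency is forced to be preserved.

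Once adjacency is preserved, Chow's theorem applies. Since $n\ne 2k$ no annihilator automorphism is available, so Chow yields a semilinear bijection $L:H\to H$, with associated automorphism $\sigma$ of $\mathbb C$, such that $f(X)=L(X)$ for every $X\in\mathcal G_k(H)$. To finish, I would fix an orthogonal apartment $A$ with base $\{e_1,\dots,e_n\}$ and write $f(A)$ as coming from an orthogonal base $\{e'_1,\dots,e'_n\}$; recovering the lines $\langle e_i\rangle$ as intersections of suitable elements of $A$ purely inside $\mathcal G_k(H)$, one obtains $L\langle e_i\rangle=\langle e'_{\pi(i)}\rangle$ for some permutation $\pi$, so $L$ carries orthogonal bases to orthogonal bases. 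Standard arguments then force $\sigma$ to be continuous, hence either the identity or complex conjugation, and $L$ to be a scalar multiple of a unitary or conjugate-unitary operator, which concludes the proof. The main obstacle is clearly the adjacency step: extracting the Chow-type relation from a one-sided apartment-preserving hypothesis through the combinatorics of complementary subsets, which the author explicitly flags as more delicate than its infinite-dimensional counterpart.
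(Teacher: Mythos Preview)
Your global strategy matches the paper's: use complementary subsets of apartments to recover adjacency, then apply Chow/Huang, then extract the (conjugate\nobreakdash-)unitary from the fact that orthogonal bases go to orthogonal bases. Two points, however, need correction.

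First, your explanation of the exclusion $(n,k)\in\{(6,2),(6,4)\}$ is wrong. The Johnson graph $J(6,2)$ has no exotic automorphisms coming from the outer automorphism of $S_6$; its automorphism group is just $S_6$. The actual mechanism in the paper is numerical. For $X,Y$ in an apartment with $\dim(X\cap Y)=m$, the number of complementary subsets containing both is $c(m)=(k-m)^2+m(n-2k+m)$. When $n\ne 2k,2k+2$ the value $c(k-1)$ is already distinguished and adjacency is detected directly. When $n=2k+2$ one has $c(k-1)=c(0)$, so adjacent pairs must be separated from orthogonal pairs by a second invariant: call $\mathcal C_{ij}$ and $\mathcal C_{i'j'}$ \emph{adjacent} when $\{i,j\}\cap\{i',j'\}\ne\emptyset$, and detect this via $|\mathcal C_{ij}\cap\mathcal C_{i'j'}|$. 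The two possible intersection sizes are $\binom{2k}{k-1}$ and $4\binom{2k-2}{k-2}$, and these coincide precisely when $k=2$. That arithmetic coincidence, not any $S_6$ phenomenon, is why $(6,2)$ (and its dual $(6,4)$) must be excluded.

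Second, and more seriously, you omit a step that the paper carries out explicitly. The complementary-subset argument only shows that $f$ preserves adjacency for pairs lying in a \emph{common orthogonal apartment}; but two adjacent $X,Y\in\mathcal G_k(H)$ need not be compatible, so this does not yet give adjacency preservation on all of $\mathcal G_k(H)$. The paper bridges the gap by choosing mutually orthogonal $1$-dimensional $P,Q\subset(X\cap Y)^{\perp}$ orthogonal to $(X+Y)\cap(X\cap Y)^{\perp}$, setting $X'=P+(X\cap Y)$ and $Y'=Q+(X\cap Y)$, and noting that each of the triples $X,X',Y'$ and $Y,X',Y'$ lies in a star of some orthogonal apartment. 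Since $n\ne 2k$, stars and tops have different cardinalities, so $f$ sends stars to stars; hence $f(X)$ and $f(Y)$ both contain the $(k-1)$-dimensional subspace $f(X')\cap f(Y')$ and are therefore adjacent. Without this construction your reduction to Chow is incomplete.
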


In Theorem \ref{theorem1} we do not require that 
the inverse transformation sends orthogonal apartments to orthogonal apartments. 
If $H$ is finite-dimensional and $f$ is a bijective transformation of ${\mathcal G}_{k}(H)$
sending compatible elements to compatible elements
then $f$ transfers orthogonal apartments to orthogonal apartments.
This is a simple consequence of the following facts: 
the class of orthogonal apartments coincides with 
the class of maximal subsets of mutually compatible elements of ${\mathcal G}_{k}(H)$, 
all orthogonal apartments in ${\mathcal G}_{k}(H)$ are of the same finite cardinality.

\begin{theorem}\label{theorem2}
Suppose that $\dim H=2k\ge 8$.
Let $f$ be a bijective transformation of ${\mathcal G}_{k}(H)$
such that $f$ and $f^{-1}$ send orthogonal apartments to orthogonal apartments,
in other words, $f$ preserves the compatibility relation in both directions.
Then there is an unitary or conjugate-unitary operator $U$ on $H$
such that for every $X\in {\mathcal G}_{k}(H)$ we have $f(X)=U(X)$
or $f(X)$ coincides with the orthogonal complement of $U(X)$.
\end{theorem}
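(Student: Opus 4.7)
The overall strategy is to exploit the orthocomplementation $X\mapsto X^{\perp}$, which when $n=2k$ is a nontrivial involution of $\mathcal{G}_{k}(H)$ preserving orthogonal apartments. This extra symmetry is exactly what forces the weaker ``$f(X)=U(X)$ or $f(X)=U(X)^{\perp}$'' conclusion, and it must be factored out before a Theorem~\ref{theorem1}-style argument can be applied.

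The first step is to prove that $f$ commutes with orthocomplementation: $f(X^{\perp})=f(X)^{\perp}$ for every $X\in\mathcal{G}_{k}(H)$. The key combinatorial fact is that when $n=2k$ the unique element of $\mathcal{G}_{k}(H)\setminus\{X\}$ belonging to \emph{every} orthogonal apartment containing $X$ is $X^{\perp}$. One direction is immediate, since any orthonormal basis of $X$ extends to an orthonormal basis of $H$ whose second half spans $X^{\perp}$. For the converse, if $Y$ lies in every apartment through $X$ then for any such apartment $Y$ must decompose as $Y_{1}\oplus Y_{2}$ with $Y_{1}\subset X$ and $Y_{2}\subset X^{\perp}$; since $Y_{1}$ and $Y_{2}$ must be spanned by subsets of every orthonormal basis of $X$ and $X^{\perp}$ respectively, one gets $Y_{1}\in\{0,X\}$ and $Y_{2}\in\{0,X^{\perp}\}$, and by dimension $Y\in\{X,X^{\perp}\}$. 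Because $f$ and $f^{-1}$ preserve orthogonal apartments, this characterization is preserved, and bijectivity together with $X^{\perp}\neq X$ forces $f(X^{\perp})=f(X)^{\perp}$.

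Consequently, $f$ descends to a bijection $\bar f$ on the quotient $\mathcal{G}_{k}(H)/\!\sim$, where $X\sim X^{\perp}$, and orthogonal apartments descend to a natural apartment-like structure on this quotient (each image apartment has $\binom{2k}{k}/2$ polar pairs). The next step is to adapt the combinatorial machinery used in the proof of Theorem~\ref{theorem1} — in particular the analysis of complementary subsets of orthogonal apartments — to show that $\bar f$ is induced by a unitary or conjugate-unitary operator $U$ on $H$ in the sense that $\bar f(\{X,X^{\perp}\})=\{U(X),U(X)^{\perp}\}$. The conclusion of Theorem~\ref{theorem2} then follows immediately by lifting: for each $X$, $f(X)$ must be one of the two elements of $\{U(X),U(X)^{\perp}\}$.

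The main obstacle is this quotient-level reconstruction. The combinatorial arguments from Theorem~\ref{theorem1} cannot, as written, distinguish $X$ from $X^{\perp}$ — which is precisely why the hypothesis $n\ne 2k$ appears there — so the plan is to run those arguments uniformly at the level of the quotient, where the $\sim$-ambiguity vanishes, and then to invoke a Wigner-type reconstruction theorem (in the spirit of \cite[Theorem 2.8]{MS}) to pass from $\bar f$ back to a concrete unitary or conjugate-unitary operator on $H$. Verifying that the quotient apartment structure retains enough information for this reconstruction — in particular that the Chow-type adjacency between polar pairs is recoverable — and that the hypothesis $2k\ge 8$ is used exactly to avoid the low-dimensional exceptions already present in Theorem~\ref{theorem1}, are the technically delicate parts of the argument.
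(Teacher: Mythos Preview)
Your first step is fine, and in fact gives a cleaner proof of $f(X^{\perp})=f(X)^{\perp}$ than the paper's: the paper deduces this from the complementary-subset count $c(m)=(k-m)^2+m^2$ via $c(0)>c(m)$ for $1\le m\le k-1$, whereas your characterization of $X^{\perp}$ as the unique non-$X$ element lying in every orthogonal apartment through $X$ works directly. Passing to the quotient $\mathcal{G}'=\mathcal{G}_k(H)/\!\sim$ is also the right move and is exactly what the paper does.

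The gap is in what comes next. You propose to ``run Theorem~\ref{theorem1}'s arguments at the level of the quotient'' and then ``invoke a Wigner-type reconstruction theorem in the spirit of \cite[Theorem 2.8]{MS}''. Neither of these is a usable plan as stated: Theorem~\ref{theorem1}'s endgame is Chow's theorem for $\Gamma_k(H)$, and there is no off-the-shelf Chow theorem for the quotient graph $\Gamma'$; the Moln\'ar--\v{S}emrl result concerns the full lattice $\mathcal{L}(H)$, not a single quotient Grassmannian, so it does not apply here. What the paper actually does is supply the missing bridge: it shows (using $c(m)=c(k-m)$ and a compatibility argument that needs $k-1\ge 3$, which is precisely where $2k\ge 8$ enters) that the induced map $f'$ on $\mathcal{G}'$ is an automorphism of $\Gamma'$, then proves that every maximal clique of $\Gamma'$ has the form $\mathcal{C}(S)=\{\{X,X^{\perp}\}:S\subset X\text{ or }S\subset X^{\perp}\}$ for a unique $S\in\mathcal{G}_{k-1}(H)$. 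This parametrization converts $f'$ into a bijection $g$ of $\mathcal{G}_{k-1}(H)$, and one checks that $g$ sends orthogonal apartments to orthogonal apartments. Since $n=2k\ne 2(k-1)$ and $n\ge 8\ne 6$, Theorem~\ref{theorem1} now applies to $g$ and produces the operator $U$. Your outline lacks this reduction to $\mathcal{G}_{k-1}(H)$, and without it the reconstruction step is not grounded.
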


\begin{rem}{\rm
Apartments preserving transformations of Grassmannians corresponding to buildings 
of classical types are described in \cite{Pankov-book1}. 
Also, there is a characteriza\-tion of isometric embeddings of Grassmann graphs 
in terms of ``generalized'' apartments \cite[Chapter 5]{Pankov-book2}.  
}\end{rem}

\section{Grassmann graphs}
To prove Theorems \ref{theorem1} and \ref{theorem2} 
we need some properties of Grassmann graphs.
Suppose that $\dim H=n$ is finite.
The {\it Grassmann graph} $\Gamma_{k}(H)$, $1<k<n-1$ 
is the graph whose vertex set is ${\mathcal G}_{k}(H)$ and 
two $k$-dimensional subspaces are adjacent vertices of this graph 
if their intersection is $(k-1)$-dimensional.
In what follows we say that two $k$-dimensional subspaces of $H$
are {\it adjacent} if they are adjacent vertices of $\Gamma_{k}(H)$.

If $S$ and $N$ are subspaces of $H$ such that $\dim S<k$ and $\dim N>k$
then we denote by $[S\rangle_{k}$ and $\langle N]_{k}$ the sets 
consisting of $k$-dimensional subspaces containing $S$ and contained in $N$,
respectively.
There are precisely the following two types of maximal cliques in $\Gamma_{k}(H)$:
\begin{enumerate}
\item[$\bullet$] the {\it stars} $[S\rangle_{k}$, $S\in {\mathcal G}_{k-1}(H)$;
\item[$\bullet$] the {\it tops} $\langle N]_{k}$, $N\in {\mathcal G}_{k+1}(H)$.
\end{enumerate}
See, for example, \cite[Section 3.2]{Pankov-book2}.

Let ${\mathcal A}$ be the orthogonal apartment of ${\mathcal G}_{k}(H)$
defined by an orthogonal base $B$. 
The restriction of $\Gamma_{k}(H)$ to ${\mathcal A}$
is isomorphic to the Johnson graph $J(n,k)$.
As above, we have two types of maximal cliques:
\begin{enumerate}
\item[$\bullet$] the {\it stars} ${\mathcal A}\cap [S\rangle_{k}$,
\item[$\bullet$] the {\it tops} ${\mathcal A}\cap\langle N]_{k}$,
\end{enumerate}
where $S\in {\mathcal G}_{k-1}(H)$ and $N\in {\mathcal G}_{k+1}(H)$ are spanned 
by subsets of $B$.
Stars and tops of ${\mathcal A}$ contain precisely $n-k+1$ and $k+1$ elements, respectively.

By classical Chow's theorem \cite{Chow}, every automorphism of the graph $\Gamma_{k}(H)$ 
is induced by an invertible semilinear operator if $n\ne 2k$.
In the case when $n=2k$, 
the automorphism group of $\Gamma_{k}(H)$ is generated by 
the automorphisms induced by invertible semilinear operators and 
the mapping $X\to X^{\perp}$, where $X^{\perp}$ is the orthogonal complement of $X$.
Huang's result \cite{Huang} (see also \cite[Section 3.10]{Pankov-book2}) says that
every bijective transformation of ${\mathcal G}_{k}(H)$ sending adjacent elements
to adjacent elements is an automorphism of $\Gamma_{k}(H)$.

\section{Complementary subsets in orthogonal apartments}

Let ${\mathcal A}$ be the orthogonal apartment of ${\mathcal G}_{k}(H)$
defined by an orthogonal base $\{e_{i}\}^{n}_{i=1}$ and let $1<k<n-1$.
For every $i\in \{1,\dots,n\}$ we denote by ${\mathcal A}(+i)$ and ${\mathcal A}(-i)$
the sets consisting of all elements of ${\mathcal A}$ which contain $e_{i}$ and 
do not contain $e_{i}$, respectively.
For any distinct $i,j\in \{1,\dots,n\}$ we define
$${\mathcal A}(+i,+j):={\mathcal A}(+i)\cap {\mathcal A}(+j),$$
$${\mathcal A}(+i,-j):={\mathcal A}(+i)\cap {\mathcal A}(-j),$$
$${\mathcal A}(-i,-j):={\mathcal A}(-i)\cap {\mathcal A}(-j).$$
A subset of ${\mathcal A}$ is called {\it inexact}
if there is an orthogonal apartment distinct from ${\mathcal A}$ and containing this subset.
By \cite[Lemma 1]{Pankov}, every maximal inexact subset is 
$${\mathcal A}(+i,+j)\cup {\mathcal A}(-i,-j).$$
We say that a subset ${\mathcal C}\subset {\mathcal A}$ is {\it complementary}
if ${\mathcal A}\setminus {\mathcal C}$ is a maximal inexact subset.
Then
$${\mathcal A}\setminus {\mathcal C}={\mathcal A}(+i,+j)\cup {\mathcal A}(-i,-j)$$
and
$${\mathcal C}={\mathcal A}(+i,-j)\cup {\mathcal A}(+j,-i).$$
This complementary subset is denoted by ${\mathcal C}_{ij}$.
Observe that ${\mathcal C}_{ij}={\mathcal C}_{ji}$.

\begin{rem}{\rm
Suppose that ${\mathcal A}$ is an arbitrary (not necessarily orthogonal) 
apartment of ${\mathcal G}_{k}(H)$. 
A subset of ${\mathcal A}$ is {\it inexact}
if there is an apartment distinct from ${\mathcal A}$ and containing this subset.
Every maximal inexact subset is of type ${\mathcal A}(+i,+j)\cup {\mathcal A}(-i)$
and the corresponding complimentary subset is ${\mathcal A}(+i,-j)$, 
see \cite[Section 5.2]{Pankov-book2}. 
}\end{rem}

\begin{lemma}\label{lemma1}
Let $f$ be a bijective transformation of ${\mathcal G}_{k}(H)$
sending orthogonal apartments to orthogonal apartments
and let ${\mathcal A}$ be an orthogonal apartment of ${\mathcal G}_{k}(H)$.
Then ${\mathcal C}\subset {\mathcal A}$ is a complementary subset if and only if
$f({\mathcal C})$ is a complementary subset of $f({\mathcal A})$.
\end{lemma}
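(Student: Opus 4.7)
The plan is to show directly that $f$ carries complementary subsets of ${\mathcal A}$ to complementary subsets of $f({\mathcal A})$, and then to obtain the reverse implication from a counting argument. The first observation is the easier half: if ${\mathcal I}\subset{\mathcal A}$ is inexact, say ${\mathcal I}\subset{\mathcal A}'$ for some orthogonal apartment ${\mathcal A}'\ne{\mathcal A}$, then $f({\mathcal I})\subset f({\mathcal A})$ and $f({\mathcal I})\subset f({\mathcal A}')$; injectivity of $f$ gives $f({\mathcal A})\ne f({\mathcal A}')$, and both are orthogonal apartments by hypothesis. Hence $f({\mathcal I})$ is inexact in $f({\mathcal A})$, so $f$ sends inexact subsets of ${\mathcal A}$ into inexact subsets of $f({\mathcal A})$.

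Next I would upgrade this to maximal inexact subsets by using the explicit description recalled before the lemma: every maximal inexact subset has the form ${\mathcal A}(+i,+j)\cup{\mathcal A}(-i,-j)$ and therefore cardinality $\binom{n-2}{k-2}+\binom{n-2}{k}$, a number that depends only on $n$ and $k$ and not on the particular apartment. Given a maximal inexact ${\mathcal M}\subset{\mathcal A}$, the first step shows $f({\mathcal M})$ is inexact in $f({\mathcal A})$ and so contained in some maximal inexact ${\mathcal M}''\subset f({\mathcal A})$; since $f$ is a bijection we have $|f({\mathcal M})|=|{\mathcal M}|=|{\mathcal M}''|$, forcing $f({\mathcal M})={\mathcal M}''$. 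Taking complements in the respective apartments, this gives the forward direction of the lemma: if ${\mathcal C}\subset{\mathcal A}$ is complementary, then $f({\mathcal C})\subset f({\mathcal A})$ is complementary.

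For the reverse implication I would exploit the fact that every orthogonal apartment has exactly $\binom{n}{2}$ complementary subsets, one ${\mathcal C}_{ij}$ for each unordered pair $\{i,j\}\subset\{1,\dots,n\}$. The forward direction produces a map from the complementary subsets of ${\mathcal A}$ to those of $f({\mathcal A})$; this map is injective because $f$ itself is, and its source and target are finite sets of the same cardinality, so it is a bijection. Consequently, if $f({\mathcal C})$ is complementary in $f({\mathcal A})$, then $f({\mathcal C})=f({\mathcal C}')$ for some complementary ${\mathcal C}'\subset{\mathcal A}$, and injectivity of $f$ forces ${\mathcal C}={\mathcal C}'$. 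The only genuinely substantive ingredient is the cardinality match for maximal inexact subsets across different apartments; once that is in hand, both directions are immediate, and no use of $f^{-1}$ sending apartments to apartments is required.
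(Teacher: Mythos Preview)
Your proposal is correct and follows essentially the same route as the paper: inexact subsets go to inexact subsets, the common cardinality $\binom{n-2}{k-2}+\binom{n-2}{k}$ of maximal inexact subsets forces maximal inexact subsets to maximal inexact subsets, and a finite counting argument (you count the $\binom{n}{2}$ complementary subsets, the paper counts the equally many maximal inexact subsets) yields the converse without invoking any hypothesis on $f^{-1}$. Your write-up is more explicit, but the underlying argument is the same.
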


\begin{proof}
 It is clear that $f$ transfers  inexact subsets of ${\mathcal A}$
to inexact subsets of $f({\mathcal A})$.
An inexact subset is maximal if and only if
it contains $$\binom{n-2}{k-2}+\binom{n-2}{k}$$ 
elements.
This implies that maximal inexact subsets of ${\mathcal A}$ 
go to maximal  inexact subsets of $f({\mathcal A})$.
Since ${\mathcal A}$ and $f({\mathcal A})$ have the same number of such subsets, 
${\mathcal X}$ is a maximal inexact subset of ${\mathcal A}$ if and only if 
$f({\mathcal X})$ is a maximal inexact subset of $f({\mathcal A})$.
This gives the claim.
\end{proof}

\begin{lemma}\label{lemma2}
Let $X,Y$ be distinct elements of an orthogonal apartment 
${\mathcal A}\subset {\mathcal G}_{k}(H)$. 
If $\dim(X\cap Y)=m$ then there are precisely
$$c(m)=(k-m)^{2}+m(n-2k+m)$$
distinct complementary subsets of ${\mathcal A}$ containing this pair.
\end{lemma}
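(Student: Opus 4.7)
The plan is to translate the question into combinatorics on index sets. Each element of the orthogonal apartment ${\mathcal A}$ is spanned by a $k$-element subset of the orthogonal base $\{e_{i}\}_{i=1}^{n}$, so it corresponds to a unique $k$-subset of $\{1,\dots,n\}$. Let $I$ and $J$ be the subsets associated to $X$ and $Y$; then $|I|=|J|=k$, and since a coordinate intersection is spanned by the common basis vectors, the hypothesis $\dim(X\cap Y)=m$ translates into $|I\cap J|=m$.

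Next I would unravel the definition of the complementary subset. By construction ${\mathcal C}_{ij}={\mathcal A}(+i,-j)\cup {\mathcal A}(+j,-i)$, so the element of ${\mathcal A}$ with index set $K$ lies in ${\mathcal C}_{ij}$ precisely when exactly one of $i,j$ belongs to $K$, that is, $|K\cap\{i,j\}|=1$. Therefore $\{X,Y\}\subset {\mathcal C}_{ij}$ if and only if
\[
|I\cap\{i,j\}|=1 \quad\text{and}\quad |J\cap\{i,j\}|=1,
\]
and the task reduces to counting unordered pairs $\{i,j\}$ satisfying this condition.

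For the counting step I would partition $\{1,\dots,n\}$ into the four disjoint blocks
\[
I\cap J,\qquad I\setminus J,\qquad J\setminus I,\qquad \{1,\dots,n\}\setminus(I\cup J),
\]
of sizes $m$, $k-m$, $k-m$, and $n-2k+m$ respectively. Case analysis on the blocks containing $i$ and $j$ shows that a pair meets both $I$ and $J$ in exactly one element in only two mutually exclusive configurations: either one endpoint lies in $I\cap J$ and the other in the complement of $I\cup J$, contributing $m(n-2k+m)$ unordered pairs; or one lies in $I\setminus J$ and the other in $J\setminus I$, contributing $(k-m)^{2}$ unordered pairs (these two blocks are disjoint, so ordered and unordered counts agree here). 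Summing gives exactly $c(m)$.

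There is no serious obstacle in this argument beyond bookkeeping. The two points that demand care are the identification $\dim(X\cap Y)=|I\cap J|$, which is specific to coordinate subspaces of an orthogonal apartment, and the symmetry ${\mathcal C}_{ij}={\mathcal C}_{ji}$, which must be respected so that each unordered pair of indices contributes exactly one complementary subset.
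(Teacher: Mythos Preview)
Your proposal is correct and follows essentially the same route as the paper: both partition the base into the four blocks $X\cap Y$, $X\setminus Y$, $Y\setminus X$, and the complement of $X+Y$, and both identify the same two admissible configurations for $\{i,j\}$ yielding the counts $(k-m)^{2}$ and $m(n-2k+m)$. The only difference is cosmetic---you phrase everything via index sets $I,J$ while the paper works directly with the basis vectors.
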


\begin{proof}
Since $\dim(X\cap Y)=m$, we have $\dim(X+Y)=2k-m$.
Let $\{e_{i}\}^{n}_{i=1}$ be one of the orthogonal bases associated to ${\mathcal A}$.
If the complementary subset ${\mathcal C}_{ij}$ contains both $X$ and $Y$ then 
one of the following possibilities is realized:
\begin{enumerate}
\item[(1)] one of $e_{i},e_{j}$ belongs to $X\setminus Y$ and the other to 
$Y\setminus X$,
\item[(2)] one of $e_{i},e_{j}$ belongs to $X\cap Y$ and the other is not contained in $X+Y$.
\end{enumerate}
There are precisely $(k-m)^2$ and $m(n-2k+m)$ distinct ${\mathcal C}_{ij}={\mathcal C}_{ji}$
satisfying $(1)$ and $(2)$, respectively.
\end{proof}

\begin{rem}{\rm
In the case when $H$ is infinite-dimensional, 
there is the following characterization of the orthogonality relation \cite[Lemma 2]{Pankov}:
two elements in an orthogonal apartment ${\mathcal A}$ are orthogonal if and only if 
there is only a finite number of complementary subsets of ${\mathcal A}$
containing this pair.
This implies that every bijective transformation of ${\mathcal G}_{k}(H)$
preserving the class of orthogonal apartments in both directions 
preserves also the orthogonality relation in both directions (see \cite{Pankov} for the details).
By \cite{Cyory,Semrl}, the latter condition guarantees that 
the bijection is induced by an unitary or conjugate-unitary operator.
}\end{rem}

\section{Proof of Theorem \ref{theorem1}}
Suppose that $\dim H=n$ is finite and not less then $3$.
Let $f$ be a bijective transformation of ${\mathcal G}_{k}(H)$
sending orthogonal apartments to orthogonal apartments.

\subsection{Preliminary remarks}
The mapping  $X\to X^{\perp}$ is a bijection between 
${\mathcal G}_{k}(H)$ and ${\mathcal G}_{n-k}(H)$.
It transfers every orthogonal apartment of ${\mathcal G}_{k}(H)$
to the orthogonal apartment of ${\mathcal G}_{n-k}(H)$ defined by the same orthogonal base.
Thus $X\to f(X^{\perp})^{\perp}$ is a bijective transformation of ${\mathcal G}_{n-k}(H)$
sending orthogonal apartments to orthogonal apartments.
If it is induced by an unitary or conjugate-unitary operator then 
$f$ is induced by the same operator.
For this reason it is sufficiently to prove Theorem \ref{theorem1}
only in the case when $k\le n-k$.

Suppose that $k=1$.
Then $f$ transfers orthogonal elements of ${\mathcal G}_{1}(H)$ to orthogonal elements.
For any $2$-dimensional subspace $Y\subset H$ 
we take orthogonal $1$-dimensional subspaces $X_{1},X_{2}\subset Y$
and extend them to an orthogonal apartment $\{X_{i}\}^{n}_{i=1}$ in ${\mathcal G}_{1}(H)$.
If  $X$ is a $1$-dimensional subspace of $Y$ then $f(X)$ is orthogonal
to $f(X_{i})$ for every $i\ge 3$, i.e. $f(X)$ is contained in $f(X_1)+f(X_2)$.
So, $f$ sends all lines of the projective space over $H$ to 
subsets of lines and, by the Fundamental Theorem of Projective Geometry \cite{Artin},
$f$ is induced by an invertible semilinear operator. 
This operator transfers orthogonal vectors to orthogonal vectors.
An easy verification shows that it is a scalar multiple of 
an unitary or conjugate-unitary operator $U$.
It is clear that $f$ is induced by $U$.

From this moment we will suppose that $1<k\le n-k$.

\subsection{The case $n\ne 2k+2$}
Following Lemma \ref{lemma2}, we consider the quadratic function 
$$c(x)=(k-x)^{2}+x(n-2k+x)=2x^{2}-(4k-n)x+k^2.$$
It takes the minimal value on $x=\frac{4k-n}{4}$.
This implies that
$$c(k-1)>c(m)$$ 
for all $m\in\{0,1,\dots,k-2\}$ if
$$k-1>\frac{4k-n}{2}$$
or, equivalently, if 
$$k<\frac{n-2}{2}.$$
By our assumption, $n=2k+l$ for some natural $l\ge 0$
and the latter inequality fails only in the case when $l\in\{0,1,2\}$.

If $n=2k+2$ then $k-1$ is equal to $(4k-n)/2$ which means that 
$$c(k-1)=c(0)$$
and the latter number is greater than $c(m)$
for any $m\in \{1,\dots,k-2\}$.

If $n=2k+1$ then $k-1$ is less than $(4k-n)/2$ which implies that $c(k-1)<c(0)$,
but we have $$c(k-1)\ne c(m)$$ for every $m\in\{0,1,\dots,k-2\}$.
Indeed, if $c(k-1)=c(x)$ and $x\ne k-1$ then an easy calculation shows that 
$x=1/2$.

Lemma \ref{lemma2} together with the above arguments give the following
characterization of adjacent elements in orthogonal apartments.

\begin{lemma}\label{lemma1-1}
Suppose that $n$ is not equal to $2k$ or $2k+2$.
Let ${\mathcal A}$ be an orthogonal apartment of ${\mathcal G}_{k}(H)$.
Then $X,Y\in{\mathcal A}$ are adjacent  if and only if 
there are precisely  $c(k-1)$
distinct complementary subsets of ${\mathcal A}$ containing this pair.
\end{lemma}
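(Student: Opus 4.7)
The plan is to deduce the lemma directly from Lemma \ref{lemma2} together with the quadratic analysis of $c$ that has been carried out in the paragraphs preceding the statement. By Lemma \ref{lemma2}, the number of complementary subsets of $\mathcal{A}$ containing a pair $X,Y \in \mathcal{A}$ depends only on $m := \dim(X \cap Y)$ and equals $c(m)$, with $m$ ranging over $\{0, 1, \dots, k-1\}$. Adjacency of $X$ and $Y$ in $\Gamma_{k}(H)$ amounts to $m = k-1$, so the forward direction is immediate; the whole content of the lemma is that $c(m) = c(k-1)$ with $m \in \{0, \dots, k-1\}$ forces $m = k-1$ under the hypotheses.

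To prove this injectivity at $m = k-1$, split on $l := n - 2k$. Under the standing assumption $k \le n-k$ and the exclusions $n \ne 2k, 2k+2$, one has $l = 1$ or $l \ge 3$. For $l \ge 3$, the preceding paragraphs already show $k-1 > (4k-n)/2$, so $k-1$ lies strictly farther from the vertex $x_{0} = (4k-n)/4$ of the parabola $c$ than every $m \in \{0, \dots, k-2\}$; convexity then yields $c(k-1) > c(m)$ for all such $m$. For $l = 1$ the strict comparison fails, but a short Vieta argument suffices: if $m$ and $k-1$ are two distinct roots of $c(x) - c(k-1) = 0$, their sum equals $(4k-n)/2 = k - 1/2$, forcing $m = 1/2$, which is not a nonnegative integer.

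Combining the two cases gives the desired injectivity, and the backward implication of the lemma follows at once. There is no genuine obstacle here, since the key quadratic bookkeeping has already been laid out before the statement; the role of the exclusions is transparent, as $n = 2k + 2$ produces the genuine collision $c(k-1) = c(0)$ which would make adjacency indistinguishable from intersection dimension $0$ by a pure count of complementary subsets.
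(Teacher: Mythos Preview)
Your proposal is correct and follows essentially the same approach as the paper. The paper does not give a separate proof of this lemma at all; it simply states that Lemma~\ref{lemma2} together with the preceding quadratic analysis yields the characterization, and the quadratic analysis it carries out is exactly your case split on $l=n-2k$ (the inequality $c(k-1)>c(m)$ for $l\ge 3$ via the vertex location, and the root-sum computation giving $x=1/2$ for $l=1$), so you have merely spelled out explicitly what the paper leaves implicit.
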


\begin{lemma}\label{lemma1-2}
As in the previous lemma, we suppose that $n$ is not equal to $2k$ or $2k+2$
and ${\mathcal A}$ is an orthogonal apartment of ${\mathcal G}_{k}(H)$.
Then $X,Y\in {\mathcal A}$ are adjacent 
if and only if $f(X)$ and $f(Y)$ are adjacent; 
moreover, $f$ transfers stars of ${\mathcal A}$ to stars of $f({\mathcal A})$.
\end{lemma}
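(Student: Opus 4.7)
The plan is to reduce everything to what has already been established: Lemma \ref{lemma1} says that $f$ induces a bijection between the complementary subsets of $\mathcal{A}$ and those of $f(\mathcal{A})$, and Lemma \ref{lemma1-1} gives a purely numerical characterization of adjacency inside an orthogonal apartment in terms of how many complementary subsets contain a given pair. Since the count $c(k-1)$ depends only on the parameters $n$ and $k$ (which are the same for $\mathcal{A}$ and $f(\mathcal{A})$), the equivalence ``$X,Y\in\mathcal{A}$ adjacent $\iff$ $f(X),f(Y)\in f(\mathcal{A})$ adjacent'' is immediate. This takes care of the first assertion.

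For the star-preservation claim I would argue as follows. The previous paragraph shows that $f$ restricted to $\mathcal{A}$ is an isomorphism from the restriction of $\Gamma_{k}(H)$ to $\mathcal{A}$ onto the restriction of $\Gamma_{k}(H)$ to $f(\mathcal{A})$, i.e.\ an isomorphism between two copies of the Johnson graph $J(n,k)$. Consequently it sends maximal cliques to maximal cliques. As recalled in Section 2, the maximal cliques of this Johnson graph are precisely the stars of $\mathcal{A}$, of cardinality $n-k+1$, and the tops of $\mathcal{A}$, of cardinality $k+1$.

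Now I invoke the standing assumption $1<k\le n-k$ together with $n\ne 2k$: these force $k<n-k$, hence
\[
n-k+1\;>\;k+1.
\]
So stars and tops in $\mathcal{A}$ (and likewise in $f(\mathcal{A})$) have different cardinalities. A bijection that sends maximal cliques to maximal cliques must respect these cardinalities, so stars of $\mathcal{A}$ must be mapped to stars of $f(\mathcal{A})$ (and tops to tops). The additional hypothesis $n\ne 2k+2$ plays no role here; it was only needed to apply Lemma \ref{lemma1-1}.

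There is no genuine obstacle in this lemma: the only thing that could possibly go wrong is stars and tops having the same size, which happens exactly when $n=2k$, and that case has been excluded. The substantive content of the step is really already packed into Lemmas \ref{lemma1} and \ref{lemma1-1}; what remains is the bookkeeping comparison $n-k+1>k+1$.
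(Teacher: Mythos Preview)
Your argument is correct and essentially identical to the paper's: both derive the adjacency equivalence by combining Lemma~\ref{lemma1} with Lemma~\ref{lemma1-1}, and then distinguish stars from tops by the cardinality count $n-k+1\ne k+1$ (which uses only $n\ne 2k$). Your additional remarks---invoking the standing assumption $k\le n-k$ to get the strict inequality, and noting that $n\ne 2k+2$ is only needed for Lemma~\ref{lemma1-1}---are accurate embellishments.
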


\begin{proof}
Using Lemmas \ref{lemma1} and \ref{lemma1-1} we show that 
$X,Y\in {\mathcal A}$ are adjacent if and only if the same holds for $f(X)$ and $f(Y)$.
Then $f$ transfers every star of ${\mathcal A}$ to a star or a top of $f({\mathcal A})$.
Stars and tops contain $n-k+1$ and $k+1$ elements, respectively.  
Since $n\ne 2k$, these numbers are distinct and 
the image of every star is a star.
\end{proof}

We prove Theorem \ref{theorem1} for $n\ne 2k+2$.

Let $X$ and $Y$ be adjacent elements of ${\mathcal G}_{k}(H)$ 
which are not contained in an orthogonal apartment.
Denote by $N$ the orthogonal complement of $X\cap Y$. 
The dimension of $N$ is equal to $n-k+1$. 
Let $S$ be the intersection of $X+Y$ with $N$.
This is a $2$-dimensional subspace.  
Then
$$\dim (S^{\perp}\cap N)=n-k-1\ge 2$$
(since $n=2k+l$ and $l\ge 1$, we have $n-k-1=k+l-1\ge k\ge 2$).
This implies the existence of orthogonal $1$-dimensional subspaces 
$P,Q\subset N$ which are orthogonal to $S$.
We set 
$$X'=P+(X\cap Y)\;\mbox{ and }\;Y'=Q+(X\cap Y).$$
Then $X,X',Y'$ are mutually compatible and the same holds for $Y,X',Y'$.
Let ${\mathcal A}$ and ${\mathcal A}'$ be orthogonal apartments 
containing $X,X',Y'$ and $Y,X',Y'$, respectively.
Since $X,X',Y'$ are contained in a star of ${\mathcal A}$,
Lemma \ref{lemma1-2} guarantees that $f(X),f(X'),f(Y')$ 
are contained in a star of $f({\mathcal A})$. 
Similarly, we establish that $f(Y),f(X'),f(Y')$ are in a star of $f({\mathcal A}')$. 
Therefore, $f(X)$ and $f(Y)$ both contain the $(k-1)$-dimensional subspace $f(X')\cap f(Y')$
which implies that they are adjacent.

So, $f$ sends adjacent vertices of $\Gamma_{k}(H)$ to adjacent vertices
which implies that $f$ is an automorphism of $\Gamma_{k}(H)$.
Then $f$ is induced by an invertible semilinear operator. 
This operator sends orthogonal vectors to orthogonal vectors.
Hence it is a scalar multiple of an unitary or conjugate-unitary operator $U$.
The transformation $f$ is induced by $U$.

\subsection{The case $n=2k+2$}
Suppose that $n=2k+2$ and consider an orthogonal apartment 
${\mathcal A}\subset {\mathcal G}_{k}(H)$.
By the previous subsection, if $X,Y\in {\mathcal A}$ and there are 
precisely $c(k-1)$ distinct complementary subsets of ${\mathcal A}$ containing this pair
then $X$ and $Y$ are adjacent or $\dim(X\cap Y)=0$ and they are orthogonal.

We say that two distinct complementary subsets ${\mathcal C}_{ij}$ and ${\mathcal C}_{i'j'}$
are {\it adjacent} if $\{i,j\}\cap \{i',j'\}\ne \emptyset$. 
In this case, we have
$$|{\mathcal C}_{ij}\cap {\mathcal C}_{i'j'}|=\binom{n-3}{k-1}+\binom{n-3}{k-2}=
\binom{n-2}{k-1}=\binom{2k}{k-1};$$
otherwise, we get
$$|{\mathcal C}_{ij}\cap {\mathcal C}_{i'j'}|=4\binom{n-4}{k-2}=4\binom{2k-2}{k-2}.$$
The equality 
$$\binom{2k}{k-1}=4\binom{2k-2}{k-2}$$
holds only for $k=2$, i.e. $n=6$.
Therefore, for $n\ne 6$ two complementary subsets 
${\mathcal C},{\mathcal C}'\subset {\mathcal A}$ are adjacent 
if and only if $f({\mathcal C})$ and $f({\mathcal C}')$ are adjacent complementary subsets 
of $f({\mathcal A})$.

If $X,Y\in {\mathcal A}$ are orthogonal then 
for every complementary subset ${\mathcal C}\subset {\mathcal A}$ containing this pair 
there is a complementary subset of ${\mathcal A}$ containing $X,Y$ and 
adjacent to ${\mathcal C}$.
In the case when $X,Y\in {\mathcal A}$ are adjacent,
there is the unique complementary subset of ${\mathcal A}$ containing $X,Y$
and non-adjacent to any other complementary subset of ${\mathcal A}$ containing $X,Y$.

Using the latter observation, 
we establish the direct analogue of Lemma \ref{lemma1-2} for $n=2k+2\ne 6$.
As in the previous subsection, we show that 
$f$ is induced by an unitary or conjugate-unitary operator. 

\begin{rem}\label{rem3}{\rm
Consider the case when $n=6$ and $k=2$.
If ${\mathcal A}$ is an orthogonal apartment of ${\mathcal G}_{k}(H)$
then any distinct $X,Y\in {\mathcal A}$ are contained 
in precisely $4$ distinct complementary subsets of ${\mathcal A}$.
The intersection of any two distinct complementary subsets consists of $3$ elements. 
Thus the dimension of the intersection of $X,Y\in {\mathcal A}$
cannot be determined in terms of complementary subsets. 
}\end{rem}

\section{Proof of Theorem \ref{theorem2}}

Suppose that $\dim H=2k\ge 8$ and $f$ is a bijective transformation of ${\mathcal G}_{k}(H)$
such that $f$ and $f^{-1}$ send orthogonal apartments to orthogonal apartments.

In this case we have
$$c(m)=(k-m)^{2}+m^2.$$
It is easy to see that 
$$c(0)>c(m)$$
for every $m\in \{1,\dots,k-1\}$
and 
$$c(m)=c(m')$$
only in the case when $m'=m$ or $m'=k-m$.
The standard arguments give the following.

\begin{lemma}\label{lemma2-1}
Let $X,Y$ be distinct elements in an  orthogonal apartment 
${\mathcal A}\subset {\mathcal G}_{k}(H)$.
If $\dim (X\cap Y)=m$ then 
$\dim (f(X)\cap f(Y))$ is equal to $m$ or $k-m$.
\end{lemma}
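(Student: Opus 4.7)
The plan is to combine Lemma \ref{lemma1} with Lemma \ref{lemma2} and the arithmetic properties of $c(m)$ that were just established, just as in the analogous arguments used in Section 4.

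First, I would observe that since both $f$ and $f^{-1}$ send orthogonal apartments to orthogonal apartments, Lemma \ref{lemma1} applies in both directions: a subset ${\mathcal C}\subset {\mathcal A}$ is a complementary subset of ${\mathcal A}$ if and only if $f({\mathcal C})$ is a complementary subset of the orthogonal apartment $f({\mathcal A})$. Consequently, the number of complementary subsets of ${\mathcal A}$ containing the pair $\{X,Y\}$ coincides with the number of complementary subsets of $f({\mathcal A})$ containing the pair $\{f(X), f(Y)\}$.

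Next, I would apply Lemma \ref{lemma2} on both sides. Set $m':=\dim(f(X)\cap f(Y))$. Since $f(X),f(Y)$ are distinct elements of the orthogonal apartment $f({\mathcal A})$ (here distinctness uses injectivity of $f$), Lemma \ref{lemma2} tells us that the number of complementary subsets of $f({\mathcal A})$ containing this pair equals $c(m')=(k-m')^2+(m')^2$ (using $n=2k$). Combining with the equality of counts from the previous paragraph, we obtain $c(m)=c(m')$.

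Finally, I would invoke the arithmetic fact already noted in the paragraph preceding the lemma: for $n=2k$ the equation $c(m)=c(m')$ has solutions only $m'=m$ or $m'=k-m$. This immediately forces $\dim(f(X)\cap f(Y))\in\{m,k-m\}$, as required. There is no real obstacle here beyond keeping track of the symmetry $c(m)=c(k-m)$, which is exactly the new feature (compared with Section 4) responsible for the dichotomy in the conclusion and, ultimately, for the appearance of orthogonal complements in the statement of Theorem \ref{theorem2}.
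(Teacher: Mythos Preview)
Your argument is correct and is precisely the ``standard argument'' the paper alludes to: Lemma~\ref{lemma1} gives a bijection between complementary subsets of ${\mathcal A}$ containing $\{X,Y\}$ and those of $f({\mathcal A})$ containing $\{f(X),f(Y)\}$, Lemma~\ref{lemma2} then yields $c(m)=c(m')$, and the observed symmetry $c(m)=c(m')\Leftrightarrow m'\in\{m,k-m\}$ finishes it. One minor remark: Lemma~\ref{lemma1} already states the biconditional under the hypothesis on $f$ alone, so you do not actually need to invoke the assumption on $f^{-1}$ at this step.
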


Since $\dim H=2k$, the orthogonal complement $X^{\perp}$
is the unique element of ${\mathcal G}_{k}(H)$ orthogonal to $X\in {\mathcal G}_{k}(H)$.
Any pair $X,X^{\perp}$ is contained in a certain orthogonal apartment of ${\mathcal G}_{k}(H)$.
In the previous lemma we put $m=0$ and get the following.

\begin{lemma}\label{lemma2-2}
For every $X\in {\mathcal G}_{k}(H)$ we have $f(X^{\perp})=f(X)^{\perp}$.
\end{lemma}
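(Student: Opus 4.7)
The plan is to derive this as a short consequence of Lemma \ref{lemma2-1} together with the structural fact that, in an orthogonal apartment of $\mathcal{G}_{k}(H)$ with $\dim H=2k$, two elements with trivial intersection must be orthogonal complements. The starting observation is that $X$ and $X^{\perp}$ always lie together in a common orthogonal apartment: extend an orthogonal base of $X$ by an orthogonal base of $X^{\perp}$ to obtain an orthogonal base of $H$, and let $\mathcal{A}$ be the associated orthogonal apartment. Both $X$ and $X^{\perp}$ are spanned by subsets of this base, so $X,X^{\perp}\in\mathcal{A}$.

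Next, since $f$ sends orthogonal apartments to orthogonal apartments, $f(\mathcal{A})$ is an orthogonal apartment of $\mathcal{G}_{k}(H)$ and both $f(X)$ and $f(X^{\perp})$ belong to it. Because $X\cap X^{\perp}=0$, Lemma \ref{lemma2-1} applied with $m=0$ forces
\[
\dim\bigl(f(X)\cap f(X^{\perp})\bigr)\in\{0,\,k\}.
\]
The value $k$ would give $f(X)=f(X^{\perp})$, which is excluded by injectivity of $f$ since $X\neq X^{\perp}$ (their intersection is zero while both are $k$-dimensional and nonzero). Hence $\dim(f(X)\cap f(X^{\perp}))=0$.

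Finally, write $f(\mathcal{A})$ as the orthogonal apartment associated to some orthogonal base $\{b_{i}\}_{i=1}^{2k}$ of $H$. Each of $f(X)$ and $f(X^{\perp})$ is spanned by a $k$-element subset of this base; the trivial intersection forces these two $k$-subsets to be disjoint, hence complementary inside the $2k$-element base. Two subspaces of $H$ spanned by complementary subsets of an orthogonal base are orthogonal complements, so $f(X^{\perp})=f(X)^{\perp}$.

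I do not anticipate a genuine obstacle here: the entire content of the lemma is packed into Lemma \ref{lemma2-1} specialized to $m=0$, and the only additional ingredient is the trivial observation that, in the equidimensional case $\dim H=2k$, membership in a common orthogonal apartment together with zero intersection is already the orthogonal-complement relation. The step that requires the small argument above is ruling out $\dim(f(X)\cap f(X^{\perp}))=k$, which is why injectivity of $f$ is used.
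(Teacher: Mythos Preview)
Your proof is correct and follows exactly the paper's approach: observe that $X$ and $X^{\perp}$ lie in a common orthogonal apartment, apply Lemma~\ref{lemma2-1} with $m=0$, and conclude. The paper states this in one line, while you have spelled out the details (constructing the base, excluding $\dim=k$ by injectivity, and reading off the orthogonal-complement relation from complementary index sets).
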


Let ${\mathcal G}'$ be the set of all $2$-element subsets 
$\{X,X^{\perp}\}\subset{\mathcal G}_{k}(H)$.
By Lemma \ref{lemma2-2}, $f$ induces a bijective transformation of ${\mathcal G}'$.
We denote this transformation by $f'$.

Consider the graph $\Gamma'$ whose vertex set is ${\mathcal G}'$
and subsets 
$$\{X,X^{\perp}\},\{Y,Y^{\perp}\}\in {\mathcal G}'$$ 
are adjacent vertices of this graph if $X$ is adjacent to $Y$ or $Y^{\perp}$.
The latter conditions guarantees that $X^{\perp}$ is adjacent to 
$Y^{\perp}$ or $Y$, respectively.
Two elements of ${\mathcal G}'$ will be called {\it adjacent}
if they are adjacent vertices of $\Gamma'$.

For every $(k-1)$-dimensional subspace $S\subset H$
we denote by ${\mathcal C}(S)$ the set of all $\{X,X^{\perp}\}\in {\mathcal G}'$
such that $X$ or $X^{\perp}$ contains $S$ 
(then the $(k+1)$-dimensional subspace $S^{\perp}$ contains $X^{\perp}$ or $X$, respectively). 
This is a maximal clique of the graph $\Gamma'$.

\begin{lemma}\label{lemma2-3}
If ${\mathcal C}$ is a maximal clique of $\Gamma'$
then ${\mathcal C}={\mathcal C}(S)$ for a certain $(k-1)$-dimen\-sional subspace $S$.
\end{lemma}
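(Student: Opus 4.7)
The plan is to reduce a maximal clique of $\Gamma'$ to an ordinary clique of the Grassmann graph $\Gamma_k(H)$, and then invoke the star/top classification of maximal cliques recalled in Section 2.

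First I would fix a base pair $\{X_0,X_0^\perp\}\in\mathcal{C}$, and for every other pair $\{Y,Y^\perp\}\in\mathcal{C}$ select the unique representative $Y^*\in\{Y,Y^\perp\}$ that is adjacent to $X_0$ in $\Gamma_k(H)$. Existence of such a representative is guaranteed by the $\Gamma'$-adjacency of the two pairs; uniqueness follows because $Y\cap X_0$ and $Y^\perp\cap X_0$ are mutually orthogonal subspaces of the $k$-dimensional space $X_0$ and so cannot both be of dimension $k-1$ once $2(k-1)>k$, i.e.\ for $k\geq 3$. This produces a candidate set $\widetilde{\mathcal{C}}:=\{X_0\}\cup\{Y^*\}$ inside $\Gamma_k(H)$.

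The central step will be to verify that $\widetilde{\mathcal{C}}$ is actually a clique in $\Gamma_k(H)$. Every $Y^*$ is by construction adjacent to $X_0$, so the real issue is adjacency between two representatives $Y^*,Z^*$. The $\Gamma'$-adjacency gives either $Y^*$ adjacent to $Z^*$ (what we want) or the unwanted cross-adjacency of $Y^*$ with $Z^{*\perp}$. In the latter case $\dim(Y^{*\perp}\cap Z^*)=k-1$, so in the $k$-dimensional space $Z^*$ the two $(k-1)$-dimensional subspaces $Y^{*\perp}\cap Z^*$ and $X_0\cap Z^*$ force
$$
\dim(Y^{*\perp}\cap X_0\cap Z^*)\geq k-2.
$$
On the other hand, $X_0\cap Y^*$ and $X_0\cap Y^{*\perp}$ are mutually orthogonal subspaces of $X_0$ whose dimensions sum to at most $k$, hence $\dim(X_0\cap Y^{*\perp})\leq 1$. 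Comparing the two bounds yields $k-2\leq 1$, contradicting $k\geq 4$. This is where the hypothesis $\dim H=2k\geq 8$ is essential, and I expect it to be the main technical obstacle.

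Once $\widetilde{\mathcal{C}}$ is known to be a clique in $\Gamma_k(H)$, it lies in some maximal clique, namely a star $[S\rangle_k$ or a top $\langle N]_k$. A star directly gives $\mathcal{C}\subset\mathcal{C}(S)$ because every representative $X$ contains the $(k-1)$-dimensional subspace $S$. A top yields the same conclusion with $S:=N^\perp$, since each $X\subset N$ has $X^\perp\supset S$. In both cases $\mathcal{C}(S)$ is a clique of $\Gamma'$ (any two elements of $[S\rangle_k$ intersect in $S$), so maximality of $\mathcal{C}$ promotes the inclusion to the equality $\mathcal{C}=\mathcal{C}(S)$, as required.
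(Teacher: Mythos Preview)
Your proposal is correct and follows the same strategy as the paper: lift the maximal clique of $\Gamma'$ to a clique of $\Gamma_k(H)$ by selecting, for each pair, the representative adjacent to a fixed $X_0$, and then invoke the star/top classification. In fact you supply the dimension-count argument showing the selected representatives are mutually adjacent in $\Gamma_k(H)$, a step the paper dispatches with ``It is clear that ${\mathcal X}$ is a maximal clique of $\Gamma_{k}(H)$.''
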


\begin{proof}
Let $\{X,X^{\perp}\}\in{\mathcal C}$.
Denote by ${\mathcal X}$ the set of all $Y\in {\mathcal G}_{k}(H)$ adjacent to $X$ 
and such that $\{Y,Y^{\perp}\}\in{\mathcal C}$.
It is clear that ${\mathcal X}$ is a maximal clique of $\Gamma_{k}(H)$.
If this is the star corresponding to a $(k-1)$-dimensional subspace $S$
then ${\mathcal C}={\mathcal C}(S)$.
If ${\mathcal X}$ is the top defined by a $(k+1)$-dimensional subspace $N$
then $N^{\perp}$ is $(k-1)$-dimensional and 
${\mathcal C}={\mathcal C}(N^{\perp})$.
\end{proof}

\begin{lemma}\label{lemma2-4}
If $X,Y,Z$ are mutually adjacent elements of ${\mathcal G}_{k}(H)$ contained in 
an orthogonal apartment then 
there is the unique maximal clique of $\Gamma'$ containing 
the corresponding elements of ${\mathcal G}'$.
\end{lemma}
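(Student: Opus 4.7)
The plan is to handle existence and uniqueness of the maximal clique separately. For existence, I would appeal to the standard Grassmann-graph fact that three mutually adjacent elements $X,Y,Z$ of ${\mathcal G}_{k}(H)$ lie either in a common star associated with the $(k-1)$-dimensional intersection $S=X\cap Y\cap Z$, or in a common top associated with the $(k+1)$-dimensional sum $N=X+Y+Z$. Since the three elements sit inside an orthogonal apartment, $S$ or $N$ is spanned by base vectors. In the star case, ${\mathcal C}(S)$ contains all three pairs $\{X,X^{\perp}\},\{Y,Y^{\perp}\},\{Z,Z^{\perp}\}$. In the top case the hypothesis $\dim H=2k$ enters: $N^{\perp}$ is $(k-1)$-dimensional and is contained in each of $X^{\perp},Y^{\perp},Z^{\perp}$, so ${\mathcal C}(N^{\perp})$ is again the sought clique. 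Since the pairs in ${\mathcal G}'$ are unordered, the top case for $(X,Y,Z)$ is equivalent to the star case for $(X^{\perp},Y^{\perp},Z^{\perp})$, so I would reduce uniqueness to the star configuration.

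For uniqueness in the star configuration, suppose ${\mathcal C}(S')$ is another maximal clique containing the three pairs; by Lemma \ref{lemma2-3} every maximal clique has this form with $\dim S'=k-1$. For each $W\in\{X,Y,Z\}$ there are two possibilities, $S'\subset W$ or $S'\subset W^{\perp}$. The apartment structure provides the key dimension inputs: each of $X,Y,Z$ is the sum of $S$ with a line spanned by a distinct base vector outside $S$, so $\dim(X+Y+Z)=k+2$ and $\dim(X+Y+Z)^{\perp}=k-2$, and moreover $X\cap(Y+Z)^{\perp}$ is one-dimensional, spanned by a single base vector.

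The uniqueness then splits into four sub-cases according to how many of $X,Y,Z$ contain $S'$. If all three contain $S'$, then $S'\subset X\cap Y\cap Z=S$ and equality of dimensions forces $S'=S$. If exactly two contain $S'$, the same intersection yields $S'=S$, but the orthogonality condition on the third element then forces $S\perp W$ for some $W\supset S$, so $S=0$, impossible for $k\ge 2$. If exactly one contains $S'$, then $S'$ is confined to a one-dimensional intersection, contradicting $\dim S'=k-1\ge 3$. If none contains $S'$, then $S'\subset(X+Y+Z)^{\perp}$ has dimension at most $k-2<k-1$, again a contradiction. Hence $S'=S$, proving uniqueness.

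The step I expect to be the main obstacle is the one-contains-two-orthogonal sub-case, where the relevant intersection collapses to a single base-vector line; the contradiction there uses the apartment structure (not merely mutual adjacency) together with $k-1\ge 2$. The remaining bounds are comfortably implied by the hypothesis $2k\ge 8$. A final small sanity check I would make is that the star-versus-top dichotomy is unambiguous for three distinct mutually adjacent elements inside an orthogonal apartment, since within the apartment a star and a top share at most two elements.
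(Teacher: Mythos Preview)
Your proof is correct and follows the same line as the paper's: establish the star/top dichotomy for three mutually adjacent apartment elements and identify the maximal clique as $\mathcal{C}(S)$ in the star case and $\mathcal{C}(N^{\perp})$ in the top case. The paper's proof stops at that identification and leaves uniqueness to the reader; your reduction of the top case to the star case via orthogonal complements and your four-subcase analysis for uniqueness fill in exactly this omitted verification, and the dimension counts you use (in particular $\dim\bigl(X\cap (Y+Z)^{\perp}\bigr)=1$ and $\dim(X+Y+Z)^{\perp}=k-2$) are correct under the standing hypothesis $2k\ge 8$.
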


\begin{proof}
For $S=X\cap Y \cap Z$ and $N=X+Y+Z$ one of the following possibilities is realized:
\begin{enumerate}
\item $\dim S=k-1$ and $\dim N=k+2$,
\item $\dim S=k-2$ and $\dim N=k+1$.
\end{enumerate}
The required maximal clique is ${\mathcal C}(S)$ or ${\mathcal C}(N^{\perp})$,
respectively.
\end{proof}

\begin{lemma}\label{lemma2-5}
The transformation $f'$ is an automorphism of the graph $\Gamma'$. 
\end{lemma}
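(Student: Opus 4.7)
The plan is to reduce the automorphism claim to showing that $f'$ preserves adjacency in $\Gamma'$; since $f^{-1}$ also satisfies the hypotheses of the theorem and induces $(f')^{-1}$, the same argument applied to $f^{-1}$ will give adjacency preservation in both directions, hence the automorphism property. So let $\{X,X^{\perp}\}$ and $\{Y,Y^{\perp}\}$ be adjacent in $\Gamma'$. Swapping $Y$ with $Y^{\perp}$ if needed, I may assume $X$ and $Y$ are adjacent in $\Gamma_{k}(H)$, so $\dim(X\cap Y)=k-1$. Fix an orthogonal apartment $\mathcal{A}$ containing $X$ and $Y$; since $\dim H=2k$, the orthogonal complement of any element of $\mathcal{A}$ lies again in $\mathcal{A}$, so $X^{\perp},Y^{\perp}\in\mathcal{A}$ as well.

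The key observation is a combinatorial identity inside an orthogonal apartment of $\mathcal{G}_{k}(H)$ when $\dim H=2k$. If elements $X,Y\in\mathcal{A}$ are spanned by the $k$-subsets $I,J$ of the defining orthogonal base, then $Y^{\perp}$ is spanned by the complementary $k$-subset $\bar J=\{1,\dots,2k\}\setminus J$, and
\[
\dim(X\cap Y)+\dim(X\cap Y^{\perp})=|I\cap J|+|I\cap\bar J|=|I|=k.
\]
In particular $\dim(X\cap Y^{\perp})=1$. Since $f(\mathcal{A})$ is again an orthogonal apartment of $\mathcal{G}_{k}(H)$, the same identity applied inside $f(\mathcal{A})$, combined with $f(Y^{\perp})=f(Y)^{\perp}$ from Lemma \ref{lemma2-2}, yields
\[
\dim(f(X)\cap f(Y))+\dim(f(X)\cap f(Y)^{\perp})=k.
\]

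Now apply Lemma \ref{lemma2-1} to the pairs $X,Y$ and $X,Y^{\perp}$: both $\dim(f(X)\cap f(Y))$ and $\dim(f(X)\cap f(Y^{\perp}))$ lie in $\{1,k-1\}$. Because $k\ge 4$, neither $1+1$ nor $(k-1)+(k-1)$ equals $k$, so the only way two numbers from $\{1,k-1\}$ can sum to $k$ is for one to be $1$ and the other $k-1$. Hence $f(X)$ is adjacent to exactly one of $f(Y)$ and $f(Y)^{\perp}=f(Y^{\perp})$, which is precisely the condition for $\{f(X),f(X)^{\perp}\}$ and $\{f(Y),f(Y)^{\perp}\}$ to be adjacent in $\Gamma'$. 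There is no real obstacle in the argument; the only delicate point is the combinatorial identity, which fails outside the middle-dimensional case $\dim H=2k$ and is what lets the two possibilities allowed by Lemma \ref{lemma2-1} be pinned down.
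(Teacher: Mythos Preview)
There is a genuine gap: you assume you can ``fix an orthogonal apartment $\mathcal{A}$ containing $X$ and $Y$'', but adjacent elements of $\mathcal{G}_k(H)$ need not be compatible and hence need not lie in any common orthogonal apartment. Concretely, if $X$ and $Y$ share a $(k-1)$-dimensional intersection but the $1$-dimensional orthogonal complements of $X\cap Y$ inside $X$ and inside $Y$ are not orthogonal to each other, then no orthogonal base of $H$ can contain both $X$ and $Y$. Your combinatorial identity $\dim(A\cap B)+\dim(A\cap B^{\perp})=k$ is valid only when $A$ and $B$ belong to the same orthogonal apartment, so everything after that line collapses in the incompatible case.

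The paper handles this missing case separately, and it is the substantive part of the proof. One constructs three auxiliary elements $X',Y',Z'$, each of the form $P+(X\cap Y)$ for mutually orthogonal $1$-dimensional $P,Q,T$ in $(X+Y)^{\perp}\cap(X\cap Y)^{\perp}$; this uses $\dim(S^{\perp}\cap N)=k-1\ge 3$, which is exactly where the hypothesis $2k\ge 8$ is needed. Then $X,X',Y',Z'$ and $Y,X',Y',Z'$ are each mutually compatible, so the already-proved compatible case applies to their images. By Lemma~\ref{lemma2-4} the corresponding elements of $\mathcal{G}'$ determine unique maximal cliques $\mathcal{C}(M)$ and $\mathcal{C}(N)$ of $\Gamma'$, and since these cliques share at least three elements one gets $M=N$, forcing the images of $\{X,X^{\perp}\}$ and $\{Y,Y^{\perp}\}$ to be adjacent. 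Your treatment of the compatible case is correct (and in fact spells out the identity that the paper leaves implicit), but the incompatible case is where the real argument lies.
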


\begin{proof}
Let $\{X,X^{\perp}\}$ and $\{Y,Y^{\perp}\}$ be adjacent elements of ${\mathcal G}'$.
We need to show that $f'$ transfers them to adjacent elements.
It is sufficient to restrict ourself to the case when 
$X$ and $Y$ are adjacent.

Suppose that there is an orthogonal apartment of ${\mathcal G}_{k}(H)$ containing $X$ and $Y$. 
Note that $X^{\perp}$ and $Y^{\perp}$ also belong to this apartment.   
By Lemma \ref{lemma2-1}, $f(X)$ and $f(Y)$ are adjacent or
$$\dim(f(X)\cap f(Y))=1.$$
The latter equality implies that $f(X)$ is adjacent to $f(Y)^{\perp}=f(Y^{\perp})$ 
Therefore, $f'$ transfers $\{X,X^{\perp}\}$ and $\{Y,Y^{\perp}\}$
to adjacent elements of ${\mathcal G}'$.

Consider the case when there is no orthogonal apartment containing $X$ and $Y$.
Let $N$ be the orthogonal complement of $X\cap Y$
and let $S$ be the intersection of $X+Y$ with $N$.
The dimensions of $N$ and $S$ are equal to $k+1$ and $2$,
respectively.
Also, we have
$$\dim (S^{\perp}\cap N)=k-1\ge 3.$$
This means that $N$ contains three mutually orthogonal $1$-dimensional subspaces 
$P,Q,T$ which are orthogonal to $S$.
We set 
$$X'=P+(X\cap Y),\;Y'=Q+(X\cap Y),\;Z'=T+(X\cap Y).$$
Note that $X,Y,X',Y',Z'$ are mutually adjacent.
Since $X, X',Y',Z'$ are mutually compatible,
there is an orthogonal apartment of ${\mathcal G}_{k}(H)$ containing them. 
The same holds for $Y, X',Y',Z'$.
Then there is an orthogonal apartment containing $$f(X), f(X'),f(Y'),f(Z')$$ and,
by the above arguments,
the corresponding elements of ${\mathcal G}'$ are mutually adja\-cent.
Lemma \ref{lemma2-4} implies the existence of the unique maximal clique 
$${\mathcal C}(M),\;M\in {\mathcal G}_{k-1}(H)$$ containing them.
Similarly, there is the unique maximal clique 
$${\mathcal C}(N),\;N\in {\mathcal G}_{k-1}(H)$$
which contains the elements of ${\mathcal G}'$ corresponding to  
$$f(Y), f(X'),f(Y'),f(Z').$$
Then ${\mathcal C}(M)\cap {\mathcal C}(N)$ contains at least $3$ elements
and Lemma \ref{lemma2-4} guarantees that $M=N$.
So, the elements of ${\mathcal G}'$ corresponding to $f(X)$ and $f(Y)$
belong to a certain maximal clique of $\Gamma'$, in other words,
they are adjacent.

Similarly, we show that $f^{-1}$ sends adjacent elements to adjacent elements.
\end{proof}

It follows from Lemma \ref{lemma2-5} that $f'$ sends maximal cliques of $\Gamma'$ to
maximal cliques and there is a transformation $g$ of ${\mathcal G}_{k-1}(H)$
such that 
$$f'({\mathcal C}(S))={\mathcal C}(g(S))$$
for every $S\in {\mathcal G}_{k-1}(H)$.
This transformation is bijective. 

\begin{lemma}\label{lemma2-6}
The transformation $g$ sends orthogonal apartments to orthogonal apartments.
\end{lemma}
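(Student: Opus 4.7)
The plan is to take an arbitrary orthogonal apartment $\mathcal{B}\subset \mathcal{G}_{k-1}(H)$ built from an orthogonal base $\{e_{i}\}^{2k}_{i=1}$, let $\mathcal{A}\subset \mathcal{G}_{k}(H)$ be the orthogonal apartment built from the same base, and verify that $g(\mathcal{B})$ is the orthogonal apartment $\mathcal{B}'\subset \mathcal{G}_{k-1}(H)$ built from the base that defines $\mathcal{A}':=f(\mathcal{A})$. Since $n=2k$, both $\mathcal{A}$ and $\mathcal{A}'$ are closed under $X\mapsto X^{\perp}$; thus they descend to subsets $\mathcal{A}^{\sim}, (\mathcal{A}')^{\sim}\subset \mathcal{G}'$, and Lemma \ref{lemma2-2} gives $f'(\mathcal{A}^{\sim})=(\mathcal{A}')^{\sim}$.

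The key step will be to count $|\mathcal{C}(S)\cap \mathcal{A}^{\sim}|$ for $S\in \mathcal{B}$. The star $\mathcal{A}\cap [S\rangle_{k}$ has exactly $k+1$ elements, each $X$ of which contributes a pair $\{X,X^{\perp}\}\in \mathcal{C}(S)\cap \mathcal{A}^{\sim}$; the resulting $k+1$ pairs are distinct because $X\supset S$ and $X^{\perp}\supset S$ cannot both hold (this would force $S=\{0\}$, whereas $\dim S=k-1\ge 3$). Every pair in $\mathcal{C}(S)\cap \mathcal{A}^{\sim}$ arises in this way: if $\{Y,Y^{\perp}\}$ lies in the intersection, after possibly swapping $Y\leftrightarrow Y^{\perp}$ one has $Y\in \mathcal{A}$ (by $\perp$-closure) and $Y\supset S$. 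Hence $|\mathcal{C}(S)\cap \mathcal{A}^{\sim}|=k+1$. Combining $f'(\mathcal{C}(S))=\mathcal{C}(g(S))$ with $f'(\mathcal{A}^{\sim})=(\mathcal{A}')^{\sim}$ yields $|\mathcal{C}(g(S))\cap (\mathcal{A}')^{\sim}|=k+1$, and running the same pair-counting bijection in reverse, now inside $\mathcal{A}'$, shows that $g(S)$ is contained in exactly $k+1$ elements of $\mathcal{A}'$.

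To finish, two distinct elements of $\mathcal{A}'$ are spanned by distinct $k$-subsets of the base defining $\mathcal{A}'$, so their intersection has dimension $\le k-1$; hence a $(k-1)$-dimensional subspace contained in at least two elements of $\mathcal{A}'$ must equal such an intersection and therefore belong to $\mathcal{B}'$. Thus $g(S)\in \mathcal{B}'$ for every $S\in \mathcal{B}$, and the bijectivity of $g$ together with $|\mathcal{B}|=|\mathcal{B}'|=\binom{2k}{k-1}$ forces $g(\mathcal{B})=\mathcal{B}'$. The main obstacle is that in $\mathcal{G}'$ the passage to unordered pairs $\{X,X^{\perp}\}$ conflates stars and tops, so $f'(\mathcal{C}(S))=\mathcal{C}(g(S))$ alone cannot tell whether $g(S)$ meets $\mathcal{A}'$ through a star or through a top; the plan sidesteps this by working inside the $\perp$-closed set $\mathcal{A}^{\sim}$, where both possibilities are accounted for in a single count.
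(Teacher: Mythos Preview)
Your proof is correct and follows the same plan as the paper: fix an orthogonal base $B$, let $\mathcal{A}$ be the associated apartment in $\mathcal{G}_{k}(H)$, take a base $B'$ defining $f(\mathcal{A})$, and check that $g$ sends the $(k-1)$-apartment for $B$ to that for $B'$. The paper simply asserts this as ``an easy verification,'' whereas you supply the details via the count $|\mathcal{C}(S)\cap\mathcal{A}^{\sim}|=k+1$ and the observation that a $(k-1)$-subspace contained in two members of $\mathcal{A}'$ must be spanned by base vectors.
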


\begin{proof}
Let $B$ be an orthogonal base of $H$
and let ${\mathcal A}$ be the associated orthogonal apartment of ${\mathcal G}_{k}(H)$.
We take any orthogonal base $B'$ corresponding to 
the orthogonal apartment $f({\mathcal A})$.
An easy verification shows that $g$ transfers 
the orthogonal apartment defined by $B$ to the orthogonal apartment defined by $B'$.
\end{proof}

By Theorem \ref{theorem1}, the transformation $g$ 
is induced by an unitary or conjugate-unitary operator $U$. 
This operator satisfies the required condition.

\end{document}